\newtheorem{thm}{Theorem}
 \newtheorem{prop}{Proposition}
\newtheorem{cor}{Corollary}
\newtheorem{lem}{Lemma}
\newtheorem{con}{Conjecture}
\begin{document}  
 
\title{Tensor Products of irreducible representations of the group   $\ GL(3,{\mathbb{F}}_{q})$    }

 \author{Luisa Aburto-Hageman}
\address{Instituto de Matem\'atica, Pontificia Universidad Cat\'olica de Valpara\'{i}so, Casilla 4059, Valpara\'{i}so, Chile}
\email{laburto@ucv.cl}

 \author{Jos\'e Pantoja}
\address{Instituto de Matem\'atica, Pontificia Universidad Cat\'olica de Valpara\'{i}so, Casilla 4059, Valpara\'{i}so, Chile}
\email{jpantoja@ucv.cl}

\author{Jorge Soto-Andrade}
\address{Departamento de Matem\'aticas, Facultad de Ciencias, Universidad de Chile, Casilla 653, Santiago, Chile }
\email{sotoandr@uchile.cl}

 \thanks{The first and second authors were partially supported by Pontificia
Universidad Cat\'{o}lica de Valpara\'{\i}so. The second and third authors were partially supported by FONDECYT Grants 1040444, 1070246 and by PICS CNRS 1413\\
{\it Mathematics Subject Classification(2000):} 20C33, 20C15}
 
\begin{abstract}
We describe the tensor products of two irreducible linear complex representations of the group $G = GL(3,{\mathbb{F}}_{q})$ in terms of  induced representations by linear characters of maximal torii and also in terms of classical and generalized Gelfand-Graev representations. Our results include MacDonald's conjectures for $G$ and at the same time they are extensions to  $G$ of finite counterparts to classical  results on tensor products of holomorphic and anti-holomorphic representations of the group $SL(2, \mathbb R) $. Moreover they provide an easy way to  decompose these tensor products, with the help of Frobenius reciprocity.  We also state some conjectures for the general case of $GL(n,{\mathbb{F}}_{q})$.
\end{abstract}
\maketitle

\section{Introduction}
 This paper studies the tensor product of two irreducible linear complex
representations of the group $G=GL(3,{\mathbb{F}}_{q})$, generalizing
previous work \cite{AHP} of the first two named authors on $GL(2,{\mathbb{F}}_{q})$ and 
$SL(2,{\mathbb{F}}_{q})$. 

Our main idea is to express tensor products of irreducible representations
in terms of induced representations. Indeed, this paper adds to the
experimental evidence which strongly suggests that, for classical groups,
tensor products of generic irreducible representations are essentially
induced representations by suitable linear characters from either of the
involved torii. Here ``essentially" means ``up to lower dimensional correcting
terms".

General results of this type are of interest for several reasons.

First, for finite classical groups, tensor products of irreducible
representations realized as induced representations of this kind, have
appeared already in the context of the famous MacDonald's conjectures, which
state, in the case of $GL(n,{\mathbb{F}}_{q})$, that the tensor product of
the canonical Steinberg representation $St$ with a generic irreducible
representation $R_{T}(\theta )$, associated to a character $\theta ,$ in
general position, of a maximal torus $T$ of $GL(n,{\mathbb{F}}_{q}),$ \
equals the representation induced by $\theta $ from $T$ \ to $\ GL(n,{%
\mathbb{F}}_{q})$. These conjectures were proved much later by Deligne and
Lusztig as a corollary of their construction in \cite {DL}.

Second, it was pointed out to us by A. Guichardet, that remarkably enough theoretical physicists, like Rideau \cite{R}, have
been interested on their own in describing tensor products of irreducible
unitary representations of a same series for classical real Lie groups like $%
SL(2,{\mathbb{R}}),$ proving that the tensor product of holomorphic and
antiholomorphic discrete series representations of $SL(2,{\mathbb{R}})$ is
given by a suitable induced representation from the corresponding torus. Rideau's results 
were  extended to principal series representations as well by Guichardet, and we realized that analogous results would hold for $GL(2,{\mathbb{F}}_{q})$ if adequate correcting terms were
introduced. So in fact, Rideau's results and MacDonald's conjectures dwell
under the same roof. A complete description of tensor products of
irreducible representations of $GL(2,{\mathbb{F}}_{q})$ and $SL(2,{\mathbb{F}
}_{q})$ in terms of induced representations, was then given in \cite{AHP}.

Third, tensor products of irreducible representations may be decomposed
quite easily, via Frobenius reciprocity, once you have described them in
terms of induced representations (see section 4 below).  

Fourth, the realization of the tensor product of two irreducible
representations as an induced representation from a linear character also
allows us to guess and glean interesting relations between special functions
of various sorts. \ Indeed, such an induced representation may be looked
upon as a "twisted" natural representation, for which spherical functions
may be calculated, in the multiplicity-free case, as in \cite{jaja}. For
instance, recent work \cite{K} on the relationship between classical Kloosterman
sums and the so called Legendre sums and Soto-Andrade sums for  $
G=PGL(2,q)$, may be understood as a consequence of the fact that  $$   
  St\otimes St= (\underset{T\,\,\uparrow
\,G}{Ind}{\mathbf{1)\; \oplus \;}}St $$
where \ $T$ \ denotes the anisotropic torus of $G,$ \ if you recall that
this induced representation is just the natural multiplicity-free
representation of $G$ \ associated to its homographic action on the (double
cover of) finite Poincar\'{e}'s upper half plane, whose spherical functions
are given by the last two aforementioned sums  \cite {jaja}. Also
Gel'fand-Graev representations appear in this way (see subsection 3.2 below).

Fifth, there is, on the other hand, a non obvious but close connections
between tensor products and Gelfand models for the classical groups. Recall
that a Gelfand model for a group $G$ is any representation of \ $G$ \ which
decomposes with multiplicity one as the sum of all the irreducible
representations of  $G$.

Intriguing experimental evidence suggests that quite often Gelfand models or
"quasi-models" may be obtained as tensor products of Steinberg
representations. The first case is $ G=PGL(2,q),$ where the tensor square
of the Steinberg representation affords a quasi-model for $G$, where only
the sign representation is missing. Analogous results seem to hold for $
PGL(n,q).$ It is indeed very interesting to realize a Gelfand model or
quasi-model as an induced representation from a linear character, specially
from the unit character. In the latter case we say that we have a geometrical
Gelfand model or quasi-model.

Sixth, although the problem of decomposing tensor products in the p-adic
case was treated in \cite{AR1,AR2,T}, apparently tensor products of
two different series of representations have not been studied, and the
description of this tensor products as induced representation by linear
characters from torii has not been worked out.

So, in this paper we describe tensor products of two irreducible
representations of the group $G=GL(3,{\mathbb{F}}_{q})$ as induced
representations by linear characters from torii of $G$, up to adequate lower
dimensional correcting terms. We notice that the situation is more complex
than in dimension $2,$ and subtler in some cases (see for example case 10  of
theorem 1 below). We concentrate here on "generic" irreducible representations,
since the "non generic" or "degenerate" lower dimensional irreducible
representations may be obtained from limiting cases of the generic ones, as
in lemma 2. We show then how to get formulas for the non generic\
representations from the generic ones.

Depending on the series of representations to which an irreducible
representation belongs, a torus is associated in a natural way, and it is
the one we use in the corresponding formula. Thus in the cases where we deal
with the tensor product of two irreducible representations belonging to two
different series, we give two descriptions, one in terms of each torus.

However, another approach is also possible in this case: to express our
tensor product in terms of the intersection of the two different torii (the center $Z(G)$ of $G$) times the upper unipotent subgroup $N$. More
specifically, let $\alpha $ be a character of    $k^{\times }=Z\left(
G\right) =Z$ and let $\psi $ be a non trivial character of $k^{\times }$
seen as character on $\ N$ \ by 
$$\left( 
\begin{array}{ccc}
1 & x & z \\ 
0 & 1 & y \\ 
0 & 0 & 1
\end{array}
\right) \longmapsto \psi \left( x+y\right). $$ 
\noindent
The classical
Gelfand-Graev representation of $\ G$ is the representation 
$Ind_{ZN}^{G}\left( \alpha \psi \right).$ We call  {\em generalized
Gelfand-Graev representation}  of $G,$ any                                                                                                                                                                                                                                                                                                                                                                                                                                                                                                                                                                                                                                                                                                                                                                                                                                                                                                                                                                                                                                                                                                                                                                                                                                                                                                                                                representation of the form $Ind_{ZN_{1}}^{G}
\left( \alpha \psi \right),$ where $N_{1}$ is the subgroup of $N$ defined by the condition 
$x=0.$ Proposition 1 below describes the tensor product of a discrete series
representation and a principal series representation of $G$ \ as the sum of
these two Gelfand-Graev representations, classical and generalized.

A similar result should be obtained in the general case of dimension $n$. We
state this as a conjecture, after proposition 1 below.

The paper is organized as follows: section 2 gives the preliminaries, mainly  the
character table of the group $G$; section 3 is devoted to our main results.

\section{Preliminaries}
\label{sec:prelim}
\subsection{Notations}
\label{subsec:not}
Let ${\mathbb{F}}_{q}$ be the finite field of $q=p^{n}$ elements, $p$ prime.
Then the quadratic and cubic extensions of ${\mathbb{F}}_{q}$ are ${\mathbb{F
}}_{q^{2}}$ and ${\mathbb{F}}_{q^{3}}$ respectively. We identify $z\in {
\mathbb{F}}_{q^{3}}$ with the ${\mathbb{F}}_{q}$-automorphism of $\ {\mathbb{
F}}_{q^{3}}$ given by $x\longmapsto zx$, and we identify $w\in {\mathbb{F}}
_{q^{2}}$ with the ${\mathbb{F}}_{q}$-automorphism of ${\mathbb{F}}_{q^{2}}$
given by $x\longmapsto wx$. We also denote by $z$ the matrix of the above
automorphism with respect to the basis $\left \{1,\sigma ,\sigma
^{2}\right
\},$ $\sigma $ a generator of ${\mathbb{F}}_{q^{3}}$ over ${
\mathbb{F}}_{q}.$ Similarly, for $w$ in ${\mathbb{F}}_{q^{2}}$, $w$ also
denotes the matrix of the automorphism $x\longmapsto wx$ with respect to the
basis $\left \{1,\tau \right \},$ $\tau $ a generator of ${\mathbb{F}}
_{q^{2}}$ over ${\mathbb{F}}_{q}$.

The above defines a monomorphism of ${\mathbb{F}}_{q^{3}}^{\times }$ into $G$
whose image is the anisotropic torus $T_{a}$.

We denote by $T_{i}$ the isotropic torus of $G$; thus $T_{i}$ is the image
of ${\mathbb{F}}_{q}^{\times }\times {\mathbb{F}}_{q}^{\times }\times {
\mathbb{F}}_{q}^{\times }$ in $G.$ Similarly we denote by $T_{m}$ the
intermediate torus $$T_{m}=\left\{ \left( 
\begin{array}{cc}
x & 0 \\ 
0 & a
\end{array}
\right) \quad \mid \quad a\in {\mathbb{F}}_{q}^{\times },\;x\in {\mathbb{F}}
_{q^{2}}^{\times }\right\} .$$
 Finally, we set $L$ equal to the intermediate
Levi subgroup $GL(2,{\mathbb{F}}_{q})\times {\mathbb{F}}_{q}^{\times }$. 

We
adopt the convention that $\eta $ denotes a representation of $G$ as well as
its character.

\subsection{The conjugacy classes of  $G = \ GL(3,{\mathbb{F}}_{q})$}
\label{subsec:conjugacy}

The elements

$T^{a}=\left( 
\begin{array}{lll}
a & 0 & 0 \\ 
0 & a & 0 \\ 
0 & 0 & a
\end{array}
\right) ;\quad T_{1}^{a}=\left( 
\begin{array}{lll}
a & 1 & 0 \\ 
0 & a & 0 \\ 
0 & 0 & a
\end{array}
\right) ;\quad T_{11}^{a}=\left( 
\begin{array}{lll}
a & 1 & 0 \\ 
0 & a & 1 \\ 
0 & 0 & a
\end{array}
\right) $

$T^{ab}=\left( 
\begin{array}{lll}
a & 0 & 0 \\ 
0 & a & 0 \\ 
0 & 0 & b%
\end{array}%
\right) ;\quad T_{1}^{ab}=\left( 
\begin{array}{lll}
a & 1 & 0 \\ 
0 & a & 0 \\ 
0 & 0 & b%
\end{array}%
\right) ;\quad T^{abc}=\left( 
\begin{array}{lll}
a & 0 & 0 \\ 
0 & b & 0 \\ 
0 & 0 & c%
\end{array}%
\right) $

\noindent
(where $a\neq b$, $a\neq c$, $b\neq c$) are representatives of different
conjugacy classes in $G$. In addition, if we set

$T^{\varkappa a}=\left( 
\begin{array}{cc}
\varkappa & 0 \\ 
0 & a%
\end{array}%
\right) ;\quad T^{z}=z$ \ (where $\ a\in {\mathbb{F}}_{q}^{\times
},\,\varkappa \in {\mathbb{F}}_{q^{2}}^{\times }$ and $z\in {\mathbb{F}}%
_{q^{3}}^{\times }$), then the set of $3$  by $3$ matrices $\left\{
T^{a},T_{1}^{a},T_{11}^{a},T^{ab},T_{1}^{ab},T^{abc},T^{\varkappa
a},T^{z}\right\} $ is {\em a full set of representatives of the conjugacy classes
of $G$.}  

Moreover the following holds.
 
\begin{lem}
  We have 
\begin{enumerate}
\item For $T=T_{1}^{a},T_{11}^{a},T_{1}^{ab},$ we have $\left\{ X\in G
\mid XTX^{-1}\in T_{i}\right\} =\phi $ and also

$\left\{ X\in G \mid XT^{\varkappa a}X^{-1}\in T_{i}\right\}
=\left\{ X\in G \mid  XT^{z}X^{-1}\in T_{i}\right\} =\phi .$

\item $\left\{ X\in G \mid  XT^{ab}X^{-1}\in T_{i}\right\} =$

$\ \ \ $

$
\begin{array}{cc}
= & \left\{ \left( 
\begin{array}{ccc}
r & s & 0 \\ 
t & u & 0 \\ 
0 & 0 & k
\end{array}
\right) ,\left( 
\begin{array}{ccc}
k & 0 & 0 \\ 
0 & r & s \\ 
0 & t & u
\end{array}
\right) \left( 
\begin{array}{lll}
0 & 0 & 1 \\ 
0 & 1 & 0 \\ 
1 & 0 & 0
\end{array}
\right) ,\qquad \qquad \qquad \qquad \qquad \right.\\  
& \left. \left( 
\begin{array}{lll}
r & 0 & s \\ 
0 & k & 0 \\ 
t & 0 & u%
\end{array}%
\right) \left( 
\begin{array}{lll}
1 & 0 & 0 \\ 
0 & 0 & 1 \\ 
0 & 1 & 0%
\end{array}%
\right)   \mid   k\in {\mathbb{F}}_{q}^{\times },\left( 
\begin{array}{ll}
r & s \\ 
t & u%
\end{array}%
\right) \in GL(2,{\mathbb{F}}_{q})\right\}%
\end{array}%
$
\noindent 
We note that $\left( 
\begin{array}{ccc}
0 & 0 & k \\ 
r & s & 0 \\ 
t & u & 0%
\end{array}%
\right) =\left( 
\begin{array}{ccc}
k & 0 & 0 \\ 
0 & r & s \\ 
0 & t & u%
\end{array}%
\right) \left( 
\begin{array}{lll}
0 & 0 & 1 \\ 
0 & 1 & 0 \\ 
1 & 0 & 0%
\end{array}%
\right) $ and

$\left( 
\begin{array}{lll}
r & s & 0 \\ 
0 & 0 & k \\ 
t & u & 0%
\end{array}%
\right) =\left( 
\begin{array}{lll}
r & 0 & s \\ 
0 & k & 0 \\ 
t & 0 & u%
\end{array}%
\right) \left( 
\begin{array}{lll}
1 & 0 & 0 \\ 
0 & 0 & 1 \\ 
0 & 1 & 0%
\end{array}%
\right) $

\item $\left\{ X\in G \mid XT^{abc}X^{-1}\in T_{i}\right\}
=T_{i}\times S_{3}$

\item For $T=T_{1}^{a},T_{11}^{a},T^{ab},T_{1}^{ab}$ we get $\left\{ X\in
G \mid  XTX^{-1}\in T_{a}\right\} =\phi $;  moreover

  $\left\{ X\in G \mid  XT^{abc}X^{-1}\in T_{a}\right\} =\phi $
and   $\left\{ X\in G \mid  XT^{\varkappa a}X^{-1}\in T_{a}\right\}
=\phi $

\item $\left\{ X\in G \mid  XT^{z}X^{-1}\in T_{a}\right\}
=T_{a}\times \Gamma _{3}$ 

\noindent
where $\Gamma _{3}$ is the Galois group of the
cubic extension generated by Frobenius automorphism, acting naturally on $%
T_{a}$.

\item For $T=T_{1}^{a},T_{11}^{a},T_{1}^{ab},$ we get $\left\{ X\in G
\mid  XTX^{-1}\in T_{m}\right\} =\phi $, and also

$\left\{ X\in G \mid  XT^{z}X^{-1}\in T_{m}\right\} =\phi $

\item $\left\{ X\in G \mid  XT^{ab}X^{-1}\in T_{m}\right\} =L$

\item $\left\{ X\in G \mid  XT^{\varkappa a}X^{-1}\in
T_{m}\right\} =T_{m}\times \left\{ \left( 
\begin{array}{cc}
\Gamma _{2} & 0 \\ 
0 & 1
\end{array}
\right) \right\} $\\
where $\Gamma _{2}$ is (isomorphic to) the Galois group of the quadratic
extension of the field ${\mathbb{F}}_{q}$, acting through Frobenius
automorphism of the quadratic extension.
\end{enumerate}
\end{lem}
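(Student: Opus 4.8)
\medskip
\noindent\textbf{Proof strategy.}
The plan is to reduce every clause to two elementary inputs: the invariants of $G$-conjugacy (semisimplicity, and the factorization type over $\mathbb{F}_q$ of the characteristic polynomial) and the computation of a handful of centralizers and torus normalizers. The basic observation is that, for a fixed element $T\in G$ and a subgroup $H\le G$, the set
\[
S(T,H)=\{\,X\in G\mid XTX^{-1}\in H\,\}
\]
is stable under left multiplication by $N_G(H)$ and under right multiplication by the centralizer $Z_G(T)$ of the element $T$; moreover, when $H$ is one of the maximal torii $T_i,T_a,T_m$, any two $G$-conjugate elements of $H$ are already conjugate under $N_G(H)$ --- the permutation of diagonal entries for $T_i$, the Galois action for $T_a$, and the $\Gamma_2$-action for $T_m$, since $G$-conjugacy of torus elements is read off the ``eigenvalue data'', on which the Weyl group $N_G(H)/H$ acts. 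Hence $S(T,H)$ is either empty or a single double coset $N_G(H)\,X_0\,Z_G(T)$, and whenever $T$ already lies in $H$ one may take $X_0=1$, so that $S(T,H)=N_G(H)\,Z_G(T)$.

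\medskip
\noindent\textbf{The vanishing statements.}
First I would dispose of the empty cases. The representatives $T_1^{a},T_{11}^{a},T_1^{ab}$ have a nontrivial unipotent part, hence so does every element $G$-conjugate to them, whereas every element of $T_i,T_a,T_m$ is semisimple; this handles the non-semisimple representatives appearing in (1), (4) and (6). For the remaining, semisimple, representatives I compare characteristic polynomials: an element of $T_i$ has characteristic polynomial splitting into three linear factors over $\mathbb{F}_q$; an element of $T_m$ always carries the linear factor $X-a$ coming from its bottom entry $a\in\mathbb{F}_q^\times$; and, since $\mathbb{F}_{q^{3}}/\mathbb{F}_q$ has no intermediate field, an element of $T_a$ is either scalar, with characteristic polynomial $(X-a)^3$, or has an irreducible cubic characteristic polynomial. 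As $T^{\varkappa a}$ has characteristic polynomial (linear)$\,\cdot\,$(irreducible quadratic), $T^{z}$ an irreducible cubic one, $T^{ab}$ the polynomial $(X-a)^2(X-b)$, and $T^{abc}$ three distinct linear factors, comparison gives at once the vanishing of $S(T^{\varkappa a},T_i)$ and $S(T^{z},T_i)$ in (1), of $S(T^{abc},T_a)$, $S(T^{\varkappa a},T_a)$ and $S(T^{ab},T_a)$ in (4), and of $S(T^{z},T_m)$ in (6).

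\medskip
\noindent\textbf{The non-empty cases.}
Here I would compute the relevant centralizers and normalizers. A regular semisimple element has as centralizer the unique maximal torus containing it, so $Z_G(T^{abc})=T_i$, $Z_G(T^{z})=T_a$ and $Z_G(T^{\varkappa a})=T_m$; and $Z_G(T^{ab})$, being the stabilizer of the eigenspace decomposition $\langle e_1,e_2\rangle\oplus\langle e_3\rangle$, is exactly the Levi subgroup $L=GL(2,\mathbb{F}_q)\times\mathbb{F}_q^\times$. On the torus side, $N_G(T_i)=T_i\rtimes S_3$ is the monomial subgroup, $N_G(T_a)=T_a\rtimes\Gamma_3$ with $\Gamma_3$ generated by Frobenius, and $N_G(T_m)=T_m\rtimes\Gamma_2$ with $\Gamma_2$ realized inside the upper $GL(2,\mathbb{F}_q)$-block; in particular $N_G(T_m)\subseteq L$. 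Substituting into $S(T,H)=N_G(H)\,Z_G(T)$ then gives: (3) $S=N_G(T_i)\,T_i=T_i\rtimes S_3$ (the set product written $T_i\times S_3$ in the statement); (5) $S=N_G(T_a)\,T_a=T_a\rtimes\Gamma_3$; (8) $S=N_G(T_m)\,T_m=T_m\rtimes\Gamma_2$; and (7) $S=N_G(T_m)\,L=L$ since $N_G(T_m)\subseteq L$. For (2) one gets $S=N_G(T_i)\,L=S_3\cdot L$, and as $S_3\cap L=\{1,(12)\}$ this is a disjoint union of three left cosets of $L$; taking as representatives the three permutations that move $e_3$ into each coordinate position and multiplying them into $L$ reproduces precisely the three matrix families displayed, the factorizations ``block-diagonal matrix times permutation'' being just the bookkeeping.

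\medskip
\noindent\textbf{Main obstacle.}
The conceptual content is light, and the real work is in the bookkeeping. Two points need genuine care. First, one must check that $N_G(T_m)$ actually lies in $L$ --- equivalently, that the Weyl group of $T_m$, the centralizer in $S_3$ of a transposition, is realized by an element of the $GL(2,\mathbb{F}_q)$-block --- which is exactly what makes (7) and (8) come out as stated. Second, one must match the abstract double coset $S_3\cdot L$ in (2) with the three explicit matrix families, i.e. verify that conjugation by the displayed representatives carries $T^{ab}$ to $\mathrm{diag}(a,a,b)$, $\mathrm{diag}(b,a,a)$ and $\mathrm{diag}(a,b,a)$ respectively. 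Everything else reduces to writing down characteristic polynomials and centralizers.
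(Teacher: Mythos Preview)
The paper states this lemma without proof, so there is nothing to compare your argument against directly. Your proof is correct and well organized: the emptiness assertions in (1), (4), (6) do reduce immediately to conjugacy invariants (non-semisimple elements cannot land in a torus; the factorization type of the characteristic polynomial over $\mathbb{F}_q$ separates the three tori), and your double-coset description $S(T,H)=N_G(H)\,Z_G(T)$ whenever $T\in H$ handles the remaining cases uniformly. The crucial input --- that two $G$-conjugate elements of a maximal torus $H$ of $GL(3,\mathbb{F}_q)$ are already $N_G(H)$-conjugate --- holds because $G$-conjugacy of semisimple elements is read off the multiset of eigenvalues over $\overline{\mathbb{F}_q}$, on which the relative Weyl group of each of $T_i,T_m,T_a$ acts transitively within a Galois orbit; you are right to check this torus by torus rather than invoke it as a black box.

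The two points you single out are indeed the only places requiring a moment's thought. For $N_G(T_m)\subseteq L$: any $g\in N_G(T_m)$ must preserve the unique $T_m$-stable decomposition $\mathbb{F}_q^{\,3}=\mathbb{F}_q^{\,2}\oplus\mathbb{F}_q$ (the two summands are distinguished, for instance, as the isotypic pieces under a regular element of $T_m$), hence $g\in L$; this is precisely what collapses (7) to $L$ and gives (8) as $T_m\rtimes\Gamma_2$. For (2), your identification $S=S_3\cdot L$ as three left $L$-cosets, indexed by the position to which $e_3$ is sent, matches the three displayed matrix families once one checks that conjugating $\mathrm{diag}(a,a,b)$ by each family produces a diagonal matrix with $b$ in the first, second, or third slot respectively.
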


 \subsection {The characters of   $G = \ GL(3,{\mathbb{F}}_{q})$}
\label{subsec: 2.3}
The following table gives the irreducible characters of the group $G.$ This
result leans on the work of Steinberg \cite{S}.\\ 
  {\bf   Character Table of  $  GL(3, \mathbb{F}_{ q})$}\\  
\begin{tabular}{|c|c|c|c|c|}
\hline
\ $
\begin{array}{l}
{\text{Elementary divisors of }} \\ 
{\text{conjugacy classes}}
\end{array}
$ & $
\begin{array}{c}
{\text{Represen-}} \\ 
{\text{tatives}}
\end{array}
$ & ${\mathbf{\chi }}_{\alpha }^{1}$ & ${\mathbf{\chi }}_{\alpha }^{q^{2}+q}$
& ${\mathbf{\chi }}_{\alpha }^{q^{3}}$ \\ \hline
\ $x-a,x-a,x-a$ & $T^{a}$ & $\alpha ^{3}\left( a\right) $ & $\left(
q^{2}+q\right) \alpha ^{3}\left( a\right) $ & $q^{3}\alpha ^{3}\left(
a\right) $ \\ \hline
\ $1,x-a,\left( x-a\right) ^{2}$ & $T_{1}^{a}$ & $\alpha ^{3}\left( a\right) 
$ & $q\alpha ^{3}\left( a\right) $ & $0$ \\ 
\ $1,1,\left( x-a\right) ^{3}$ & $T_{11}^{a}$ & $\alpha ^{3}\left( a\right) $
& $0$ & $0$ \\ \hline
\ $1,x-a,\left( x-a\right) \left( x-b\right) $ & $T^{ab}$ & $\alpha \left(
a^{2}b\right) $ & $\left( q+1\right) \alpha \left( a^{2}b\right) $ & $%
q\alpha \left( a^{2}b\right) $ \\ \hline
\ $1,1,\left( x-a\right) ^{2}\left( x-b\right) $ & $T_{1}^{ab}$ & $\alpha
\left( a^{2}b\right) $ & $\alpha \left( a^{2}b\right) $ & $0$ \\ \hline  
\ $1,1,\left( x-a\right) \left( x-b\right) \left( x-c\right) $ & $T^{abc}$ & 
$\alpha \left( abc\right) $ & $2\alpha \left( abc\right) $ & $\alpha \left(
abc\right) $ \\ \hline
\ $1,1,\left( x-a\right) \left( x-\varkappa \right) \left( x-\varkappa
^{q}\right) $ & $T^{\varkappa a}$ & $\alpha \left( a\varkappa \varkappa
^{q}\right) $ & $0$ & $-\alpha \left( a\varkappa \varkappa ^{q}\right) $ \\ \hline
\ $1,1,\left( x-z\right) \left( x-z^{q}\right) \left( x-z^{q^{2}}\right) $ & 
$T^{z}$ & $\alpha \left( zz^{q}z^{q^{2}}\right) $ & $-\alpha \left(
zz^{q}z^{q^{2}}\right) $ & $\alpha \left( zz^{q}z^{q^{2}}\right) $ \\ \hline
\ $
\begin{array}{l}
a,b,c\in {\mathbb{F}}_{q}^{\times },a\neq b\neq c\neq a \\ 
\varkappa \in {\mathbb{F}}_{q^{2}}-{\mathbb{F}}_{q},z\in {\mathbb{F}}%
_{q^{3}}-{\mathbb{F}}_{q}%
\end{array}%

$ &  &  &  &  \\ \hline 
\ Number of characters &  & $q-1$ & $q-1$ & $q-1$ \\ \hline 
\ Series &  & Principal series & Principal series & Principal series \\ \hline

\end{tabular}

\begin{tabular}{|c|c|c|}
\hline
\ $
\begin{array}{c}
{\text{Represen-}} \\ 
{\text{tatives}}%
\end{array}%
$ & ${\mathbf{\chi }}_{\alpha ,\beta }^{q^{2}+q+1}$ & ${\mathbf{\chi }}
_{\alpha ,\beta }^{q\left( q^{2}+q+1\right) }$ \\ \hline 
$T^{a}$ & $\left( q^{2}+q+1\right) \left( \alpha \beta ^{2}\right) \left(
a\right) $ & $q\left( q^{2}+q+1\right) \left( \alpha \beta ^{2}\right)
\left( a\right) $ \\ \hline
$T_{1}^{a}$ & $\left( q+1\right) \left( \alpha \beta ^{2}\right) \left(
a\right) $ & $q\left( \alpha \beta ^{2}\right) \left( a\right) $ \\ \hline
$T_{11}^{a}$ & $\left( \alpha \beta ^{2}\right) \left( a\right) $ & $0$ \\ \hline 
$T^{ab}$ & $\left( q+1\right) \alpha \left( a\right) \beta \left( ab\right)
+\beta ^{2}\left( a\right) \alpha \left( b\right) $ & $\left( q+1\right)
\alpha \left( a\right) \beta \left( ab\right) +q\beta ^{2}\left( a\right)
\alpha \left( b\right) $ \\ \hline
$T_{1}^{ab}$ & $\alpha \left( a\right) \beta \left( ab\right) +\beta
^{2}\left( a\right) \alpha \left( b\right) $ & $\alpha \left( a\right) \beta
\left( ab\right) $ \\ \hline
$T^{abc}$ & $\sum\limits_{\left( a,b,c\right) }\alpha \left( a\right) \beta
\left( bc\right) $ & $\sum \limits_{\left( a,b,c\right) }\alpha \left(
a\right) \beta \left( bc\right) $ \\   \hline
$T^{\varkappa a}$ & $\alpha \left( a\right) \beta \left( \varkappa \varkappa
^{q}\right) $ & $-\alpha \left( a\right) \beta \left( \varkappa \varkappa
^{q}\right) $ \\  \hline
$T^{z}$ & $0$ & $0$ \\ \hline
 
\ Number of characters & $\left( q-1\right) \left( q-2\right) $ & $\left(
q-1\right) \left( q-2\right) $ \\ \hline
\ Series & Principal series &  Principal Series \\ \hline 
 
\end{tabular}

\noindent 
\begin{tabular}{|c|c|c|c|}
\hline
\ $%
\begin{array}{c}
{\text{Represen-}} \\ 
{\text{tatives}}%
\end{array}%
$ & ${\mathbf{\chi }}_{\alpha ,\beta ,\gamma }^{\left( q+1\right) \left(
q^{2}+q+1\right) }$ & ${\mathbf{\chi }}_{\alpha ,\lambda }^{q^{3}-1}$ & ${%
\mathbf{\chi }}_{\varphi }^{\left( q-1\right) \left( q^{2}-1\right) }$ \\ \hline
$T^{a}$ & $\left( q+1\right) \left( q^{2}+q+1\right) \left( \alpha \beta
\gamma \right) \left( a\right) $ & $\left( q^{3}-1\right) \alpha \left(
a\right) \lambda \left( a\right) $ & $\left( q-1\right) \left(
q^{2}-1\right) \varphi \left( a\right) $ \\ \hline 
$T_{1}^{a}$ & $\left( 2q+1\right) \left( \alpha \beta \gamma \right) \left(
a\right) $ & $-\alpha \left( a\right) \lambda \left( a\right) $ & $-\left(
q-1\right) \varphi \left( a\right) $ \\ \hline
$T_{11}^{a}$ & $\left( \alpha \beta \gamma \right) \left( a\right) $ & $%
-\alpha \left( a\right) \lambda \left( a\right) $ & $\varphi \left( a\right) 
$ \\ \hline
$T^{ab}$ & $\left( q+1\right) \sum \limits_{\left( \alpha ,\beta ,\gamma
\right) }\left( \alpha \beta \right) \left( a\right) \gamma \left( b\right) $
& $\left( q-1\right) \alpha \left( b\right) \lambda \left( a\right) $ & $0$
\\ \hline
$T_{1}^{ab}$ & $\sum \limits_{\left( \alpha ,\beta ,\gamma \right) }\left(
\alpha \beta \right) \left( a\right) \gamma \left( b\right) $ & $-\lambda
\left( a\right) \alpha \left( b\right) $ & $0$ \\ \hline
$T^{abc}$ & $\sum \limits_{\left( \alpha ,\beta ,\gamma \right) }\alpha
\left( a\right) \beta \left( b\right) \gamma \left( c\right) $ & $0$ & $0$
\\ \hline
$T^{\varkappa a}$ & $0$ & $-\alpha \left( a\right) \left( \lambda +\lambda
^{q}\right) \left( \varkappa \right) $ & $0$ \\ \hline
$T^{z}$ & $0$ & $0$ & $\left( \varphi +\varphi ^{q}+\varphi ^{q^{2}}\right)
\left( z\right) $ \\ \hline
\  &  & $%
\begin{array}{l}
\lambda \in \left( {\mathbb{F}}_{q^{2}}^{\times }\right) ^{\wedge } \\ 
\lambda \neq \lambda ^{q}%
\end{array}
$ & $
\begin{array}{l}
\varphi \in \left( {\mathbb{F}}_{q^{3}}^{\times }\right) ^{\wedge } \\ 
\varphi \neq \varphi ^{q}%
\end{array}%
$ \\ \hline
\ $%
\begin{array}{c}
{\text{Number of}} \\ 
{\text{characters}}%
\end{array}%
$ & $\frac{1}{6}\left( q-1\right) \left( q-2\right) \left( q-3\right) $ & $%
\frac{1}{3}q\left( q-1\right) ^{2}$ & $\frac{1}{3}q\left( q^{2}-1\right) $
\\ \hline
\  &  Principal series & Intermediate series & Discrete series\\ \hline
 
\end{tabular}
 
\bigskip

The following lemma describes the decomposition of the character obtained
from an irreducible character in the "degenerate" case, i.e., when two or
more of the (distinct) parameters are now taken to be equal. \bigskip
 
\begin{lem}
We have 
\begin{enumerate}
\item $\chi _{\alpha \circ N_{3}}^{(q-1)(q^{2}-1)}=\chi _{\alpha }^{1}-\chi
_{\alpha }^{q^{2}+q}+\chi _{\alpha }^{q^{3}}$

\item $\chi _{\alpha ,\beta \circ {\mathbf{N}}_{2}}^{q^{3}-1}=\chi _{\alpha
,\beta }^{q^{3}+q^{2}+q}-\chi _{\alpha ,\beta }^{q^{2}+q+1}$

\item $\chi _{\alpha ,\alpha \circ N_{2}}^{q^{3}-1}=\chi _{\alpha
}^{q^{3}}-\chi _{\alpha }^{1}$

\item $\chi _{\alpha ,\alpha }^{q(q^{2}+q+1)}=\chi _{\alpha }^{q^{3}}+\chi
_{\alpha }^{q^{2}+q}$

\item $\chi _{\alpha ,\alpha }^{q^{2}+q+1}=\chi _{\alpha }^{1}+\chi _{\alpha
}^{q^{2}+q}$

\item $\chi _{\alpha ,\beta ,\beta }^{(q+1)(q^{2}+q+1)}=\chi _{\alpha ,\beta
}^{q^{2}+q+1}+\chi _{\alpha ,\beta }^{q(q^{2}+q+1)}$

\item $\chi _{\alpha ,\alpha ,\alpha }^{(q+1)(q^{2}+q+1)}=\chi _{\alpha
}^{1}+2\chi _{\alpha }^{q^{2}+q}+\chi _{\alpha }^{q^{3}}$
\end{enumerate}
\end{lem}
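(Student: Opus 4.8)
The plan is to use the observation that each symbol on the left‑hand side is, by definition, the class function on $G$ obtained from a generic irreducible character in the table above by specializing its parameters: in (4)--(7) one lets two or three of the linear characters $\alpha,\beta,\gamma$ (or $\lambda$) coincide, while in (1)--(3) one feeds the generic formula with a character of the special form $\alpha\circ N_{2}$ or $\alpha\circ N_{3}$, where $N_{2}\colon{\mathbb F}_{q^{2}}^{\times}\to{\mathbb F}_{q}^{\times}$ and $N_{3}\colon{\mathbb F}_{q^{3}}^{\times}\to{\mathbb F}_{q}^{\times}$ denote the norm maps. Since every entry of the character table is an honest function of these parameters (a Laurent monomial in the eigenvalues composed with the $\alpha,\beta,\dots$), each specialization yields a well‑defined virtual character of $G$; and two virtual characters agree as soon as they agree on the fixed family of class representatives $\{T^{a},T_{1}^{a},T_{11}^{a},T^{ab},T_{1}^{ab},T^{abc},T^{\varkappa a},T^{z}\}$ of Subsection~\ref{subsec:conjugacy}. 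So the entire lemma reduces to a finite, identity‑by‑identity, column‑by‑column check.

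Before doing that I would record the two elementary relations that drive the simplifications. If $\varphi=\alpha\circ N_{3}$ then $\varphi$ is $\Gamma_{3}$‑invariant, so $\varphi=\varphi^{q}=\varphi^{q^{2}}$, whence $(\varphi+\varphi^{q}+\varphi^{q^{2}})(z)=3\alpha(zz^{q}z^{q^{2}})$ and $\varphi$ sends an ${\mathbb F}_{q}^{\times}$‑eigenvalue $a$ to $\alpha(a)^{3}$; similarly, if $\lambda=\alpha\circ N_{2}$ then $\lambda=\lambda^{q}$, so $(\lambda+\lambda^{q})(\varkappa)=2\alpha(\varkappa\varkappa^{q})$ and $\lambda(a)=\alpha(a)^{2}$ for $a\in{\mathbb F}_{q}^{\times}$. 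With these in hand each identity follows by substituting into the relevant row of the table and collecting terms: e.g.\ in (1) the discrete‑series formula evaluated at $T^{z}$ gives $(\varphi+\varphi^{q}+\varphi^{q^{2}})(z)=3\alpha(zz^{q}z^{q^{2}})$, which is exactly the value of $\chi_{\alpha}^{1}-\chi_{\alpha}^{q^{2}+q}+\chi_{\alpha}^{q^{3}}$ at $T^{z}$, and the remaining columns go the same way; (2)--(3) use the $N_{2}$‑relation identically. Identities (4)--(7) involve no norms at all — one only sets $\beta=\alpha$, $\gamma=\beta$, or $\gamma=\beta=\alpha$ in the principal‑series formulas — the single delicate point being that the orbit sums $\sum_{(a,b,c)}$ and $\sum_{(\alpha,\beta,\gamma)}$ must be read as sums over all of $S_{3}$ acting on the triple \emph{with multiplicities} (this is what continuity in the parameters forces), so that when two entries of the triple coincide such a sum degenerates to twice a three‑term sum; this is precisely what produces the coefficient $2$ of $\chi_{\alpha}^{q^{2}+q}$ in (7), reflecting the dimensions $1,2,1$ of the irreducible characters of $S_{3}$.

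As built‑in consistency checks I would evaluate each identity at $T^{a}$ with $a=1$, reducing it to the corresponding numerical identity between dimensions, e.g.\ $(q-1)(q^{2}-1)=1-(q^{2}+q)+q^{3}$ for (1) and $(q+1)(q^{2}+q+1)=1+2(q^{2}+q)+q^{3}$ for (7); and where independent confirmation of the signs and multiplicities is wanted, one can instead compute the inner products $\langle\,\mathrm{LHS},\chi\,\rangle$ against the irreducible characters via the orthogonality relations together with the class sizes implicit in Subsection~\ref{subsec:conjugacy}. Conceptually each left‑hand side is, up to sign, a Deligne--Lusztig virtual character $R_{T}^{\theta}$ at a non‑generic $\theta$, and the stated decompositions are those predicted by the general theory for $GL_{3}$; but the direct tabular verification is self‑contained and shorter here.

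The work is thus bookkeeping rather than genuine difficulty: the only things needing real care are keeping the parametrizations of the degenerate orbit sums consistent in (6)--(7), and checking in (2)--(3) that the value at $T^{\varkappa a}$ of the intermediate‑series character collapses correctly once $\lambda=\lambda^{q}$. None of the eight per‑identity columnwise comparisons is more than a short computation with the two norm relations above.
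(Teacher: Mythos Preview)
Your proposal is correct and is exactly the approach the paper takes: its entire proof reads ``Follows from the character table of $G$,'' and what you have written is simply a careful unpacking of that one line, spelling out the norm relations $\varphi=\varphi^{q}=\varphi^{q^{2}}$ for $\varphi=\alpha\circ N_{3}$ and $\lambda=\lambda^{q}$ for $\lambda=\alpha\circ N_{2}$ and the columnwise verification on the class representatives. No alternative method is involved.
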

\begin{proof}
Follows from the character table of $G$.
\end{proof}

\section {Tensor Products of Irreducible Characters of the
Group $G.$}
\subsection{Description in terms of induced representations from torii}
In what follows, we denote by $\widetilde{\alpha }$ an extension to ${%
\mathbb{F}}_{q^{3}}^{\times }$ (or to ${\mathbb{F}}_{q^{2}}^{\times }$,
whichever is the case) of the character $\alpha \in \widehat{{\mathbb{F}}%
_{q}^{\times }}$ . On the other hand, 
for characters $\lambda $ and  $\mu$ of 
${\mathbb{F}}_{q^{3}}^{\times }$ (or of ${\mathbb{F}}_{q^{2}}^{\times }$), 
$\lambda _{\mid }$ denotes the
restriction to ${\mathbb{F}}_{q}^{\times }$ of the character $\lambda$ and 
 $\mu ^{q}$ stands for $\mu \circ F$, where $F$ is the Frobenius ${\mathbb{F}}_{q}^{\times }$-automorphism of ${\mathbb{F}}_{q^{3}}^{\times }$
(or ${\mathbb{F}}_{q^{2}}^{\times }$), given by $F(x)=x^{q}.$
 
\begin{thm}  With notations as above, we have:
\begin{enumerate}
\item 
\begin{enumerate}
\item[ i.] $\chi _{\alpha ,\lambda }^{q^{3}-1}\otimes \chi _{\beta ,\gamma
,\delta }^{\left
(q+1\right
)\left
(q^{2}+q+1\right
)}=Ind_{T_{m}}^{G}\left
(\gamma
\delta \lambda ,\alpha \beta \right )+  \chi _{\alpha \beta ,\widetilde {
\gamma \delta }\lambda }^{q^{3}-1}   + \\  +\left (\chi _{\alpha \gamma ,\widetilde {
\beta \delta }\lambda }^{q^{3}-1}+ 
\chi _{\alpha \delta ,\widetilde {\beta
\gamma }\lambda }^{q^{3}-1}\right ) \otimes  \chi _{1}^{q^{2}+q}$

\item[ ii.] $\chi _{\alpha \lambda ,(\alpha \circ {\mathbf{N}}_{2})\lambda
^{2}}^{q^{3}-1}\otimes \chi _{\beta ,\gamma ,\delta }^{\left( q+1\right)
\left( q^{2}+q+1\right) }=Ind_{T_{i}}^{G}\left( \alpha \lambda \beta ,\alpha
\lambda \gamma ,\alpha \lambda \delta \right) -\chi _{\alpha \lambda \beta
,\alpha \lambda \gamma ,\alpha \lambda \delta }^{\left( q+1\right) \left(
q^{2}+q+1\right) }$
\end{enumerate}

\item $\chi _{\alpha ,\lambda }^{q^{3}-1}\otimes \chi _{\beta ,\mu
}^{q^{3}-1}=Ind_{T_{m}}^{G}(\lambda \mu ,\alpha \beta )-\chi _{\alpha \beta
,\lambda \mu ^{q}}^{q^{3}-1}$

\item \begin{enumerate}
\item[ i.] $\chi _{\alpha ,\lambda }^{q^{3}-1}\otimes \chi _{\varphi
}^{(q-1)(q^{2}-1)}=Ind_{T_{a}}^{G}\left (\widetilde {\alpha }\widetilde {%
\lambda _{\mid }}\varphi \right )-\chi _{\ \widetilde {\alpha }\widetilde {%
\lambda _{\mid }}\varphi }^{(q-1)(q^{2}-1)}$

\item[ ii.] $\chi _{\alpha ,\lambda }^{q^{3}-1}\otimes \chi _{\varphi
}^{(q-1)(q^{2}-1)}=Ind_{T_{m}}^{G}\left (\lambda ,\alpha \varphi
\right
)+\chi _{\ \alpha \varphi ,\lambda }^{q^{3}-1}\otimes \left (\chi
_{1}^{1}-\chi _{1}^{q^{2}+q}\right )$
\end{enumerate}

\item $\chi _{\delta }^{q^{3}}\otimes \chi _{\alpha ,\beta ,\gamma }^{\left(
q+1\right) \left( q^{2}+q+1\right) }=Ind_{T_{i}}^{G}\left( \alpha \delta
,\beta \delta ,\gamma \delta \right) $

\item $\chi _{\alpha }^{q^{3}}\otimes \chi _{\varphi
}^{\left
(q-1\right
)\left (q^{2}-1\right )}=Ind_{T_{a}}^{G}\left (\alpha
\circ {\mathbf{N}}_{3}\right )\varphi $\smallskip

\item $\chi _{\alpha }^{q^{3}}\otimes \chi _{\beta ,\lambda
}^{q^{3}-1}=Ind_{T_{m}}^{G}\left (\left (\alpha \circ {\mathbf{N}}%
_{2}\right
)\lambda ,\alpha \beta \right )$ \smallskip

\item 
\begin{enumerate}
\item[ i.] $\chi _{\alpha }^{q^{3}}\otimes \chi _{\beta
}^{q^{3}}=Ind_{T_{m}}^{G}\left (\alpha \beta \circ {\mathbf{N}}_{2},\alpha
\beta \right )+\chi _{\alpha \beta }^{q^{3}}$

\item[ ii.] $\chi _{\alpha }^{q^{3}}\otimes \chi _{\beta
}^{q^{3}}=Ind_{T_{i}}^{G}\left( \alpha \beta ,\alpha \beta ,\alpha \beta
\right) -\chi _{\alpha \beta }^{q^{3}}\otimes \left( 2\chi
_{1}^{q^{2}+q}+\chi _{1}^{1}\right) $

\item[ iii.] $\chi _{\alpha }^{q^{3}}\otimes \chi _{\beta
}^{q^{3}}=Ind_{T_{a}}^{G}\alpha \beta \circ {\mathbf{N}}_{3}+\chi _{\alpha
\beta }^{q^{3}}\otimes \left (\chi _{1}^{q^{2}+q}-\chi _{1}^{1}\right )$
\end{enumerate}

\item 
\begin{enumerate}
\item[ i.] $\chi _{\varphi }^{(q-1)(q^{2}-1)}\otimes \chi _{\alpha ,\beta
,\gamma
}^{\left
(q+1\right
)\left
(q^{2}+q+1\right
)}=Ind_{T_{a}}^{G}\varphi
\left (\widetilde {\alpha \beta \gamma }\right
)+\chi _{\varphi \left (%
\widetilde {\alpha \beta \gamma }\right
)}^{(q-1)(q^{2}-1)}\otimes \\
\left
(2\chi _{1}^{q^{2}+q}+\chi _{1}^{1}\right )$

\item[ ii.] $\chi _{\varphi }^{(q-1)(q^{2}-1)}\otimes \chi _{\alpha ,\beta
,\gamma }^{\left( q+1\right) \left( q^{2}+q+1\right) }=Ind_{T_{i}}^{G}\left(
\alpha ,\beta ,\varphi \gamma \right) +\chi _{\alpha ,\beta ,\varphi \gamma
}^{\left( q+1\right) \left( q^{2}+q+1\right) }\\ 
\otimes \left( \chi
_{1}^{1}-\chi _{1}^{q^{2}+q}\right) $
\end{enumerate}

\smallskip

\item $\chi _{\varphi }^{(q-1)(q^{2}-1)}\otimes \chi _{\psi
}^{(q-1)(q^{2}-1)}=$

$Ind_{T_{a}}^{G}\varphi \psi +\chi _{\varphi \psi
^{q}}^{(q-1)(q^{2}-1)}+\chi _{\varphi \psi ^{q^{2}}}^{(q-1)(q^{2}-1)}-\chi
_{\varphi \psi }^{(q-1)(q^{2}-1)}\otimes \left (\chi _{1}^{q^{2}+q}+\chi
_{1}^{1}\right )$ \smallskip

\item $\chi _{\alpha ,\beta ,\gamma
}^{\left
(q+1\right
)\left
(q^{2}+q+1\right )}\otimes \chi _{\delta
,\varepsilon ,\eta }^{\left
(q+1\right )\left (q^{2}+q+1\right )}=$

$Ind_{T_{i}}^{G}\left( \alpha \delta ,\beta \varepsilon ,\gamma \eta \right)
+\sum\limits_{\left( \delta \varepsilon \eta \right) \in S_{3}-\left\{
I\right\} }\chi _{\alpha \delta ,\beta \varepsilon ,\gamma \eta }^{\left(
q+1\right) \left( q^{2}+q+1\right) }+$

$\left (\chi _{\alpha \varepsilon ,\beta \eta ,\gamma \delta
}^{\left
(q+1\right )\left (q^{2}+q+1\right )}+\chi _{\alpha \eta ,\beta
\delta ,\gamma \varepsilon }^{\left
(q+1\right
)\left
(q^{2}+q+1\right
)}\right
)\otimes \left (\chi
_{1}^{q^{2}+q}-2\chi _{1}^{1}\right )$
\end{enumerate}
\end{thm}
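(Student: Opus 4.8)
\emph{Proof strategy.} The plan is to verify the identity as an equality of class functions on $G$, by evaluating both sides on each of the eight conjugacy class representatives $T^{a},T_{1}^{a},T_{11}^{a},T^{ab},T_{1}^{ab},T^{abc},T^{\varkappa a},T^{z}$ of Subsection 2.2. The left hand side is the pointwise product of two entries of the character table of Subsection 2.3. On the right hand side we read $\chi_{\mu_{1},\mu_{2},\mu_{3}}^{(q+1)(q^{2}+q+1)}$ as $Ind_{B}^{G}(\mu_{1}\otimes\mu_{2}\otimes\mu_{3})$, whose character values are the ones displayed in the table whether or not the $\mu_{i}$ are distinct (Lemma 2 being on hand should one wish to resolve a degenerate case into genuine irreducibles), so the only ingredient not already tabulated is $Ind_{T_{i}}^{G}(\alpha\delta,\beta\varepsilon,\gamma\eta)$, induction of a linear character from the split torus exactly as in case 4. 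Its value at $g$ is $|T_{i}|^{-1}\sum\mu(x^{-1}gx)$, summed over $x\in G$ with $x^{-1}gx\in T_{i}$, and Lemma 1(1)--(3) describes that index set: it is empty unless $g$ is split semisimple, so $Ind_{T_{i}}^{G}(\mu)$ vanishes outside $T^{a},T^{ab},T^{abc}$, where its values work out to $q^{3}(q+1)(q^{2}+q+1)\,(\mu_{1}\mu_{2}\mu_{3})(a)$, to $q(q+1)\,S_{a,b}(\mu_{1},\mu_{2},\mu_{3})$ and to $P_{a,b,c}(\mu_{1},\mu_{2},\mu_{3})$ respectively, where I abbreviate the ``pair sum'' $S_{a,b}(\nu_{1},\nu_{2},\nu_{3})=\sum_{k}\big(\prod_{j\ne k}\nu_{j}\big)(a)\,\nu_{k}(b)$ and the ``permanent sum'' $P_{a,b,c}(\nu_{1},\nu_{2},\nu_{3})=\sum_{w\in S_{3}}\nu_{1}(a_{w(1)})\nu_{2}(a_{w(2)})\nu_{3}(a_{w(3)})$ with $(a_{1},a_{2},a_{3})=(a,b,c)$.

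The shape of the formula, and a partial sanity check, comes from combining the projection formula with Mackey's theorem. Writing $\chi_{\alpha,\beta,\gamma}^{(q+1)(q^{2}+q+1)}=Ind_{B}^{G}(\alpha\otimes\beta\otimes\gamma)$, one gets $\chi_{\alpha,\beta,\gamma}^{(q+1)(q^{2}+q+1)}\otimes\chi_{\delta,\varepsilon,\eta}^{(q+1)(q^{2}+q+1)}$ as a sum of six representations induced from the subgroups $B\cap wBw^{-1}$, $w\in S_{3}$: the cell $w=1$ gives the principal series $Ind_{B}^{G}(\alpha\delta,\beta\varepsilon,\gamma\eta)$, the long cell $w_{0}$ (where $B\cap w_{0}Bw_{0}^{-1}=T_{i}$) gives the leading term $Ind_{T_{i}}^{G}$, and the four intermediate cells, once the reducible principal series inside them are resolved by Lemma 2, recombine into the five rearranged principal series and the twisted pair. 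I would use this only to anticipate the statement, not to execute it, and then check the class-function identity directly.

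The bulk of the work is the class-by-class comparison. On $T^{\varkappa a}$ and $T^{z}$ both sides are $0$, since every principal series character of $G$ vanishes there and $Ind_{T_{i}}^{G}$ is supported on split classes. On $T^{a},T_{1}^{a},T_{11}^{a}$ every term is an explicit integer multiple of $(\alpha\beta\gamma\delta\varepsilon\eta)(a)$ and the identity collapses to $(q+1)^{2}(q^{2}+q+1)^{2}=(q+1)(q^{2}+q+1)\big(q^{3}+5+2(q^{2}+q-2)\big)$, to $(2q+1)^{2}=(2q+1)\big(5+2(q-2)\big)$, and to $1=5-4$, using the table values $(\chi_{1}^{q^{2}+q}-2\chi_{1}^{1})(T^{a})=q^{2}+q-2$, $(\chi_{1}^{q^{2}+q}-2\chi_{1}^{1})(T_{1}^{a})=q-2$ and $(\chi_{1}^{q^{2}+q}-2\chi_{1}^{1})(T_{11}^{a})=-2$. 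The serious cases are $T^{ab}$, $T_{1}^{ab}$ and $T^{abc}$. Writing $S_{\tau}$ (respectively $P_{\tau}$), for $\tau\in S_{3}$, for the pair sum $S_{a,b}$ (respectively the permanent sum $P_{a,b,c}$) of the triple obtained by pairing $\alpha,\beta,\gamma$ with the $\tau$-rearrangement of $\delta,\varepsilon,\eta$, and putting $A=S_{a,b}(\alpha,\beta,\gamma)$, $D=S_{a,b}(\delta,\varepsilon,\eta)$, everything reduces to the elementary identities $\sum_{\tau\ \mathrm{transposition}}S_{\tau}=AD$, $\sum_{\tau\in S_{3}}S_{\tau}=2AD$ (whence $\sum_{\tau\ \mathrm{even}}S_{\tau}=AD$ as well) and $\sum_{\tau\in S_{3}}P_{\tau}=P_{a,b,c}(\alpha,\beta,\gamma)\,P_{a,b,c}(\delta,\varepsilon,\eta)$, each verified by expanding the nine (resp.\ thirty-six) monomials on both sides; one also uses $(\chi_{1}^{q^{2}+q}-2\chi_{1}^{1})(T^{ab})=q-1$, $(\chi_{1}^{q^{2}+q}-2\chi_{1}^{1})(T_{1}^{ab})=-1$, $(\chi_{1}^{q^{2}+q}-2\chi_{1}^{1})(T^{abc})=0$, and the observation that the three transposition-rearrangements are exactly the three ``transposition'' terms appearing in $\sum_{S_{3}\setminus\{I\}}$, while the two $3$-cycle-rearrangements are exactly the pair $\chi_{\alpha\varepsilon,\beta\eta,\gamma\delta}^{(q+1)(q^{2}+q+1)}+\chi_{\alpha\eta,\beta\delta,\gamma\varepsilon}^{(q+1)(q^{2}+q+1)}$ that carries the correcting factor $\chi_{1}^{q^{2}+q}-2\chi_{1}^{1}$.

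The step I expect to be the real obstacle is organizational rather than mathematical: one must fix, once and for all, the convention matching $\alpha,\beta,\gamma$ (and $\delta,\varepsilon,\eta$) to the coordinates of $T_{i}$ so that the five rearranged principal series and the twisted pair emerge grouped exactly as stated. Nothing essential rides on the choice, because $Ind_{T_{i}}^{G}$ of a character depends only on the multiset of its coordinates (Weyl conjugation being inner in $G$) and $Ind_{B}^{G}$ of three characters is insensitive to their order; but a careless choice permutes which rearranged term carries which coefficient, and keeping this consistent simultaneously over $T^{ab}$, $T_{1}^{ab}$ and $T^{abc}$ is where the care lies.
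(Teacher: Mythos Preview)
Your approach is essentially the same as the paper's: verify each identity as an equality of class functions by evaluating both sides on the eight conjugacy class representatives, using the character table together with Lemma~1 to compute the induced characters. The paper's proof is equally terse---it states this method and works out only case~8.i as an illustrative example---whereas you chose case~10 and gave considerably more detail (including the Mackey heuristic, which the paper does not mention but which is a nice way to anticipate the shape of the correcting terms); your computations check out, and the same template handles the remaining nine cases once one uses the other parts of Lemma~1 for $Ind_{T_m}^G$ and $Ind_{T_a}^G$.
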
 
\begin{proof}
The proof consists in computing the relevant characters on the different
conjugacy classes of $G$ (See section 2). Specifically, the computations
rely on lemma 1 and the character table of $G.$

As an example of the above, we present a proof of 8.i.

\begin{enumerate}
\item[ a)] In the case of the conjugacy classes given by $%
T^{a},T_{1}^{a},T_{11}^{a},T^{ab},T_{1}^{ab},T^{abc}$ and $T^{\varkappa a}$,
the result follows directly from both the character table and lemma 1.

\item[ b)] We have

$\left( \chi _{\varphi }^{(q-1)(q^{2}-1)}\otimes \chi _{\alpha ,\beta
,\gamma }^{\left( q+1\right) \left( q^{2}+q+1\right) }\right) (T^{z})=\left[
\left( \varphi +\varphi ^{q}+\varphi ^{q^{2}}\right) (z)\right] 0=0$

Also, using the character table and part 7 of Lemma 1, we have 

$\left(
Ind_{T_{a}}^{G}\varphi \left( \widetilde{\alpha \beta \gamma }\right) +\chi
_{\varphi \left( \widetilde{\alpha \beta \gamma }\right)
}^{(q-1)(q^{2}-1)}\otimes \left( 2\chi _{1}^{q^{2}+q}+\chi _{1}^{1}\right)
\right) (T^{z})=$

$\dfrac{1}{\left\vert T_{a}\right\vert }\sum \limits_{\substack{ X\in G  \\ 
XT^{z}X^{-1}\in T_{a}}}\varphi \left( \widetilde{\alpha \beta \gamma }
\right) \left( XT^{z}X^{-1}\right) -   \varphi \left( \widetilde{\alpha
\beta \gamma }\right)(z)  -  \left( \varphi \left( \widetilde{\alpha \beta \gamma }
\right) \right) ^{q} (z)  -   \left( \varphi \left( \widetilde{\alpha \beta \gamma } \right) \right) ^{q^{2}} (z)$

Now, by part 5 of lemma 1, the above expression becomes

$\dfrac{1}{\left\vert T_{a}\right\vert }\sum \limits_{X\in T_{a}\times
\Gamma _{3}}\varphi \left( \widetilde{\alpha \beta \gamma }\right) \left(
XT^{z}X^{-1}\right) -\varphi \left( \widetilde{\alpha \beta \gamma }\right)
(z)-\left( \varphi \left( \widetilde{\alpha \beta \gamma }\right) \right)
^{q}(z)-    $

$  \ \left( \varphi \left( \widetilde{\alpha \beta \gamma }\right)
\right) ^{q^{2}}(z) =
\dfrac{\left\vert T_{a}\right\vert }{\left\vert T_{a}\right\vert }\left(
\varphi \left( \widetilde{\alpha \beta \gamma }\right) (z)+\varphi \left( 
\widetilde{\alpha \beta \gamma }\right) (z^{q})+\varphi \left( \widetilde{%
\alpha \beta \gamma }\right) (z^{q^{2}})\right) $ 

$-  \varphi \left( \widetilde{\alpha \beta \gamma }\right) (z)-\left( \varphi
\left( \widetilde{\alpha \beta \gamma }\right) \right) ^{q}(z)-\left( \
\varphi \left( \widetilde{\alpha \beta \gamma }\right) \right) ^{q^{2}}(z)=0$%
.
\end{enumerate}
\end{proof}

Finally, using lemma 2, the next corollary address the case of
irreducible representations that arise from limit cases of previous formulas
where now some parameters coincide.

\begin{cor}

With notations as above, we have

\begin{enumerate}
\item $\chi _{\alpha }^{q^{3}}\otimes \left[ \chi _{\beta ,\gamma
}^{q^{2}+q+1}+\chi _{\beta ,\gamma }^{q\left( q^{2}+q+1\right) }\right]
=Ind_{T_{i}}^{G}\left( \alpha \beta ,\alpha \gamma ,\alpha \gamma \right) $

\item $\chi _{\alpha }^{q^{3}}\otimes \chi _{\beta }^{q^{2}+q}=2\chi
_{\alpha \beta }^{q^{3}}+Ind_{T_{m}}^{G}\left (\alpha \beta \circ {\mathbf{N}%
}_{2},\alpha \beta \right )-Ind_{T_{a}}^{G}\left (\alpha \circ {\mathbf{N}}%
_{3}\right )\left (\beta \circ {\mathbf{N}}_{3}\right )$
\end{enumerate}
\end{cor}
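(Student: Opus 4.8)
The plan is to obtain both identities from theorem~1 and lemma~2 by specializing the generic formulas of theorem~1 to coincident parameters, exactly as the paragraph preceding the corollary announces. The underlying principle is that each object occurring here --- an (irreducible or reducible) character $\chi^{\cdot}_{\cdot}$, an induced character $Ind_{T}^{G}(\cdot)$ --- is given by one fixed closed formula on each conjugacy class of $G$, and the character computations behind theorem~1 use only the distinctness of the \emph{eigenvalue} data $a\neq b\neq c$ of a class, never the distinctness of the \emph{parameters}; hence those identities persist verbatim when two parameters are made to coincide, and lemma~2 records how the resulting reducible characters break up.

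\emph{Part 1.} I would start from lemma~2(6), which after renaming $(\alpha,\beta)$ to $(\beta,\gamma)$ reads $\chi^{(q+1)(q^{2}+q+1)}_{\beta,\gamma,\gamma}=\chi^{q^{2}+q+1}_{\beta,\gamma}+\chi^{q(q^{2}+q+1)}_{\beta,\gamma}$. Tensoring with $\chi^{q^{3}}_{\alpha}$ and applying part~4 of theorem~1 with outer parameter $\alpha$ and triple $(\beta,\gamma,\gamma)$ gives $\chi^{q^{3}}_{\alpha}\otimes\chi^{(q+1)(q^{2}+q+1)}_{\beta,\gamma,\gamma}=Ind_{T_{i}}^{G}(\alpha\beta,\alpha\gamma,\alpha\gamma)$, which is the asserted formula. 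The only thing used beyond the letter of theorem~1(4) is that it stays valid when two of the three characters $\alpha\beta,\alpha\gamma,\alpha\gamma$ of $T_{i}$ coincide, which is immediate from the remark above (or can be checked class by class as in the proof of theorem~1).

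\emph{Part 2.} The cleanest route is to observe, straight from the character table, that $\chi^{q^{3}}_{\alpha}=\chi^{q^{3}}_{1}\otimes(\alpha\circ\det)$ and $\chi^{q^{2}+q}_{\beta}=\chi^{q^{2}+q}_{1}\otimes(\beta\circ\det)$, whence $\chi^{q^{3}}_{\alpha}\otimes\chi^{q^{2}+q}_{\beta}=\chi^{q^{3}}_{\alpha\beta}\otimes\chi^{q^{2}+q}_{1}$; the same equality also falls out, in the spirit of the preceding remark, by tensoring the sum of lemma~2(4) and lemma~2(5) with $\chi^{q^{3}}_{\alpha}$, using part~1 above for the reducible piece, $\chi^{q^{3}}_{\alpha}\otimes\chi^{1}_{\beta}=\chi^{q^{3}}_{\alpha\beta}$ for the linear piece, and part~7.ii of theorem~1 for $\chi^{q^{3}}_{\alpha}\otimes\chi^{q^{3}}_{\beta}$. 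It then remains to rewrite $\chi^{q^{3}}_{\alpha\beta}\otimes\chi^{q^{2}+q}_{1}$ in the stated form: subtract part~7.iii of theorem~1 from part~7.i, both applied with the two parameters equal to $\alpha\beta$ and $1$; since both sides then compute $\chi^{q^{3}}_{\alpha\beta}\otimes\chi^{q^{3}}_{1}$, equating them and solving for $\chi^{q^{3}}_{\alpha\beta}\otimes\chi^{q^{2}+q}_{1}$ yields $2\chi^{q^{3}}_{\alpha\beta}+Ind_{T_{m}}^{G}((\alpha\beta)\circ{\mathbf{N}}_{2},\alpha\beta)-Ind_{T_{a}}^{G}((\alpha\beta)\circ{\mathbf{N}}_{3})$; finally $(\alpha\beta)\circ{\mathbf{N}}_{3}=(\alpha\circ{\mathbf{N}}_{3})(\beta\circ{\mathbf{N}}_{3})$ since ${\mathbf{N}}_{3}$ is a group homomorphism.

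The one genuinely delicate point --- and the step I would justify most carefully --- is the legitimacy of specializing the identities of theorem~1 to coincident parameters (and of substituting the lemma~2 decompositions into them); everything else is elementary bookkeeping in the representation ring of $G$. If one prefers to sidestep this, the safe alternative is to prove the corollary by the very method of theorem~1: evaluate both sides of each identity on the class representatives $T^{a},\dots,T^{z}$ via lemma~1 and the character table. That is routine but considerably longer, and it is exactly the specialization principle that makes these two formulas genuine corollaries rather than fresh computations.
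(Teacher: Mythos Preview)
Your argument is correct. Part~1 is exactly the paper's proof: specialize theorem~1(4) to two equal parameters and invoke lemma~2(6).

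For Part~2 you take a slightly different but equally short route. The paper uses lemma~2(1) to write
\[
\chi_{\beta}^{q^{2}+q}=\chi_{\beta}^{1}+\chi_{\beta}^{q^{3}}-\chi_{\beta\circ\mathbf{N}_{3}}^{(q-1)(q^{2}-1)},
\]
tensors with $\chi_{\alpha}^{q^{3}}$, and then applies theorem~1(7.i) to the middle term and theorem~1(5) (specialized to $\varphi=\beta\circ\mathbf{N}_{3}$) to the last term. You instead reduce to $\chi_{\alpha\beta}^{q^{3}}\otimes\chi_{1}^{q^{2}+q}$ via a twist by a character of $\det$, and extract that quantity by equating theorem~1(7.i) and~(7.iii). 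Both arguments rely on the same specialization principle you flag, and both are one-line computations once that principle is accepted; your version has the minor advantage of staying entirely within part~7, while the paper's version makes the role of lemma~2(1) explicit.
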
 
\begin{proof}
The first formula follows from 4 of theorem 1 above, taking two parameters to be
equal. To prove the second, it is enough to use parts 7 and 5 of theorem 1 and to reduce terms
applying part 1 of Lemma 2.
\end{proof}

\subsection{Description in terms of Gelfand-Graev representations}

The next proposition describes the tensor product of a discrete and a principal series representation of $G$ as the sum of the classical  and a generalized Gelfand-Graev representation of  $G$. 
 
We denote by $N$ the standard unipotent subgroup of $G$, and by $N_{2}$ the
subgroup of $N$ whose $(1,2)$ entry is $0,$ i.e., 

$\ \ \ \ \ \ \ \ N=\left\{
\left( 
\begin{array}{ccc}
1 & x & z \\ 
0 & 1 & y \\ 
0 & 0 & 1%
\end{array}%
\right) |\;x,y,z\in k\right\} ,$ $\ \ \ N_{1}=\left\{ \left( 
\begin{array}{ccc}
1 & 0 & z \\ 
0 & 1 & y \\ 
0 & 0 & 1%
\end{array}%
\right) |\;y,z\in k\right\} $.

For a non trivial character $\varphi $ of $k^{+}$, we also denote by $
\varphi $ the character on $N$ defined by $\ \ \ \ \  \ \ \ \varphi (\left ( 
\begin{array}{ccc}
1 & x & z \\ 
0 & 1 & y \\ 
0 & 0 & 1%
\end{array}
\right ))=\varphi \left (x+y\right ).$

\begin{prop} 
With notations as above, we have

$\chi _{\psi }^{(q-1)(q^{2}-1)}\otimes \chi _{\beta ,\gamma ,\delta
}^{\left( q+1\right) \left( q^{2}+q+1\right) }=Ind_{ZN}^{G}(\psi \beta
\gamma \delta )\varphi +Ind_{ZN_{1}}^{G}(\psi \beta \gamma \delta )\varphi $
\end{prop}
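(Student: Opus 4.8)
The statement is an identity of (honest) characters of $G$, so I would prove it exactly in the style of Theorem 1: evaluate both sides on a representative of each conjugacy class, using the character table of Section 2.3 together with the induced‑character formula
$$\chi_{Ind_{H}^{G}\theta}(g)=\frac{1}{|H|}\sum_{\substack{x\in G\\ xgx^{-1}\in H}}\theta(xgx^{-1}).$$
The decisive first observation is that the left‑hand character is supported only on the three ``unipotent‑type'' classes $T^{a},T_{1}^{a},T_{11}^{a}$: by the character table the discrete series character $\chi_{\psi}^{(q-1)(q^{2}-1)}$ vanishes on $T^{ab},T_{1}^{ab},T^{abc},T^{\varkappa a}$, while the principal series character $\chi_{\beta,\gamma,\delta}^{(q+1)(q^{2}+q+1)}$ vanishes on $T^{\varkappa a}$ and $T^{z}$, so their product vanishes on every class except $T^{a},T_{1}^{a},T_{11}^{a}$. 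On the right, both $Ind_{ZN}^{G}$ and $Ind_{ZN_{1}}^{G}$ are induced from subgroups of $ZN$, and every element of $ZN$ is a scalar times an upper unipotent, hence has all eigenvalues in ${\mathbb{F}}_{q}^{\times}$ and equal; so these two characters are supported only on $T^{a},T_{1}^{a},T_{11}^{a}$ as well. Thus the proposition reduces to three scalar identities, each carrying $(\psi\beta\gamma\delta)$ (evaluated on $Z$) as an overall factor — which also pins down the meaning of $\psi\beta\gamma\delta$ as the common central character of both sides.

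On $T^{a}=aI$ the two sides are just the respective dimensions times $(\psi\beta\gamma\delta)(a)$, and the dimension equality $(q-1)(q^{2}-1)(q+1)(q^{2}+q+1)=[G:ZN]+[G:ZN_{1}]$ follows at once from $|GL(3,{\mathbb{F}}_{q})|=q^{3}(q-1)^{3}(q+1)(q^{2}+q+1)$, $|ZN|=(q-1)q^{3}$, $|ZN_{1}|=(q-1)q^{2}$. For $T_{1}^{a}$ and $T_{11}^{a}$ I would compute the right‑hand characters from the induced‑character formula above, using: the centralizer orders $|Z_{G}(T_{1}^{a})|=q^{3}(q-1)^{2}$ and $|Z_{G}(T_{11}^{a})|=q^{2}(q-1)$; the counts of elements of $N$ (resp. of $N_{1}$) of Jordan type $(2,1)$ and $(3)$ — namely $(2q+1)(q-1)$ and $q(q-1)^{2}$ in $N$, and $q^{2}-1$ and $0$ in $N_{1}$; and the description, analogous to Lemma 1, of the sets $\{x\in G:xT_{1}^{a}x^{-1}\in ZN\}$, etc. The character values then become sums of $\varphi$ over affine subvarieties of ${\mathbb{F}}_{q}$, which collapse via $\sum_{t\in{\mathbb{F}}_{q}}\varphi(t)=0$ and $\sum_{t\neq 0}\varphi(t)=-1$; one gets that $Ind_{ZN}^{G}$ contributes $-(q^{2}-1)(\psi\beta\gamma\delta)(a)$ at $T_{1}^{a}$ and $(\psi\beta\gamma\delta)(a)$ at $T_{11}^{a}$, and $Ind_{ZN_{1}}^{G}$ contributes $-q(q-1)(\psi\beta\gamma\delta)(a)$ at $T_{1}^{a}$ and $0$ at $T_{11}^{a}$, whose sums are the left‑hand values $-(q-1)(2q+1)(\psi\beta\gamma\delta)(a)$ and $(\psi\beta\gamma\delta)(a)$.

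There is a shortcut for the $N_{1}$‑part worth isolating: since $N_{1}\triangleleft N$ with $N/N_{1}\cong{\mathbb{F}}_{q}^{+}$ abelian, and the character $\theta=(\psi\beta\gamma\delta)\varphi$ of $ZN$ extends its restriction to $ZN_{1}$, one has $Ind_{ZN_{1}}^{ZN}(\theta|_{ZN_{1}})=\bigoplus_{t\in{\mathbb{F}}_{q}}\vartheta_{t}$, where $\vartheta_{t}$ is the character $au\mapsto(\psi\beta\gamma\delta)(a)\varphi(t\,u_{12}+u_{23})$; conjugating by $\mathrm{diag}(t,1,1)$ shows $\vartheta_{t}$ is $G$‑conjugate to $\theta=\vartheta_{1}$ for every $t\neq 0$, so $Ind_{ZN_{1}}^{G}(\theta|_{ZN_{1}})=(q-1)\,Ind_{ZN}^{G}\theta\oplus Ind_{ZN}^{G}\vartheta_{0}$ and the right‑hand side of the proposition equals $q\,Ind_{ZN}^{G}\theta+Ind_{ZN}^{G}\vartheta_{0}$, i.e. $q$ times the classical Gelfand–Graev character plus its ``degenerate'' companion; this halves the evaluations needed. (Alternatively, one could substitute part 8 of Theorem 1 on the left and reduce to an identity between $Ind_{T_{a}}^{G}$ of a linear character and these Gelfand–Graev characters, but that does not seem to shorten the bookkeeping.) I expect the only real obstacle to be precisely that bookkeeping in the previous paragraph: determining the conjugating sets $\{x:xT_{1}^{a}x^{-1}\in ZN\}$ and $\{x:xT_{11}^{a}x^{-1}\in ZN\}$ — the $ZN$‑analogue of Lemma 1 — and verifying that the attached Gauss‑type sums over $\varphi$ cancel down to exactly the constants above.
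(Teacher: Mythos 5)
Your proposal is correct and follows essentially the same route as the paper: a class-by-class comparison of characters via the induced-character formula, with the only nontrivial work at $T_{1}^{a}$ and $T_{11}^{a}$ (your intermediate values $-(q^{2}-1)$, $-q(q-1)$ at $T_{1}^{a}$ and $1$, $0$ at $T_{11}^{a}$ agree with the paper's sums $-q^{3}(q-1)^{2}$ and $-q^{4}(q-1)^{2}$ divided by $|ZN|$, $|ZN_{1}|$), your support argument merely making explicit why the paper can call the remaining classes straightforward. The Mackey-type remark that $Ind_{ZN_{1}}^{G}$ splits as $(q-1)$ copies of the classical Gelfand--Graev character plus a degenerate companion is a nice optional economy not used in the paper, but it does not change the nature of the argument.
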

\begin{proof}
We compute first the involved characters on the conjugacy class of $%
T_{1}^{a}: $

$\left[ \chi _{\psi }^{(q-1)(q^{2}-1)}\otimes \chi _{\beta ,\gamma ,\delta
}^{\left( q+1\right) \left( q^{2}+q+1\right) }\right]
(T_{1}^{a})=-(q-1)(2q+1)\left( \psi \beta \gamma \delta \right) \left(
a\right) .$

Let $D_{1}=\left\{ X\;|\;XT_{1}^{a}X^{-1}\in ZN_{1}\right\} =\left\{ \left( 
\begin{array}{ccc}
p & q & r \\ 
l & m & n \\ 
0 & t & 0%
\end{array}%
\right) \in G\right\} ,$ and

let $D_{2}=\left\{ \left( 
\begin{array}{ccc}
p & q & r \\ 
0 & m & n \\ 
0 & t & s%
\end{array}%
\right) \;|\;s\neq 0,p\neq 0,ms-tn\neq 0\right\} $, we see that,

$D=\left\{ X\;|\;XT_{1}^{a}X^{-1}\in ZN\right\} =D_{1}\cup D_{2}=\left\{
\left( 
\begin{array}{ccc}
p & q & r \\ 
l & m & n \\ 
0 & t & s%
\end{array}%
\right) \in G\;|\;ls=0\right\} $,

so

$\left[ Ind_{ZN}^{G}(\psi \beta \gamma \delta )\varphi
+Ind_{ZN_{1}}^{G}(\psi \beta \gamma \delta )\varphi \right] \left(
T_{1}^{a}\right) =\\
\dfrac{1}{\left\vert ZN\right\vert }\sum \limits_{X\in
D}(\psi \beta \gamma \delta )\varphi \left( XT_{1}^{a}X^{-1}\right) 
+\dfrac{1%
}{\left\vert ZN_{1}\right\vert }\sum \limits_{X\in D_{1}}(\psi \beta \gamma
\delta )\varphi \left( XT_{1}^{a}X^{-1}\right) = $

$=\left[ \dfrac{1}{\left\vert ZN\right\vert }+\dfrac{1}{\left\vert
ZN_{1}\right\vert }\right] \sum \limits_{\substack{ t\in {\mathbb{F}}%
_{q}^{\times };q,m\in {\mathbb{F}}_{q}  \\ pn-lr\neq 0}}(\psi \beta \gamma
\delta )\left( a\right) \varphi \left( \dfrac{a^{-1}l}{t}\right)
+ \\
+  \dfrac{1}{\left\vert ZN\right\vert }  \sum \limits_{\substack{ s,p\in {\mathbb{F}}
_{q}^{\times };r,q\in {\mathbb{F}}_{q}  \\ ms-tn\neq 0}}(\psi \beta \gamma
\delta )\left( a\right) \varphi \left( \dfrac{a^{-1}ps}{ms-tn}\right) =$

$=\left[ \dfrac{q+1}{q^{3}\left( q-1\right) }\left( -q^{3}(q-1)^{2}\right) +%
\dfrac{1}{q^{3}\left( q-1\right) }\left( -q^{4}(q-1)^{2}\right) \right]
(\psi \beta \gamma \delta )\left( a\right) $

$=-(q-1)(2q+1)\left (\psi \beta \gamma \delta \right )\left (a\right ).$

The computations on the conjugacy class of $T_{11}^{a}$ is similar to the
above one. In this case the sum corresponding to $ZN_{1}$ has no support.

The remaining cases are straightforward.
\end{proof}

The above proposition suggests that   for $G = GL(n, \mathbb F_{q})$  
  the tensor product of a cuspidal (discrete) and a principal
series representation may be expressed as the direct sum of  $ \frac{(n-2)(n-1)}{2} + 1  $   
packets of generalized Gelfand-Graev representations of the same dimension,  
the first one consisting simply of the  classical Gelfand-Graev representation and the last one consisting only of the generalized Gelfand-Graev representation induced from   $Z$  times the upper unipotent subgroup  with  all non zero upper diagonal entries in  the last column.

To describe our general conjecture more precisely we introduce some notations:

Let   $G = GL(n, \mathbb F_{q})  $ and  $Z$ be the center of  $G.$  Let  $\alpha$ be any character of  $Z$ and  $\varphi$ any non trivial character of    $\mathbb F_{q}^{+} $. We extend as usual  $\varphi$  to a character, still denoted by  $\varphi,$ of the standard upper unipotent subgroup $N,$ or any of its subgroups,    by    $ \varphi(u_{ij}) = \varphi (u_{12}+u_{23}+ \cdots + u_{(n-1)n}) $ for  $u = (u_{ij}) \in N$.

For any subgroup  $N'$ of    $N$, we put
$$  \Gamma_{N'}(\alpha\varphi) =  Ind_{ZN'}^{G}(\alpha\varphi). $$

Let   $  \tilde n   =   1 + 2 + \cdots + (n-2)  = \frac{(n-2)(n-1)}{2}.    $ 

We define the family of numbers  $ c_{j}(n) $ for $0 \leq j \leq \tilde n $ by
$$   (q+1)(q^{2}+q+1) \cdots  +(q^{n-2}+q^{n-1}+ \cdots+1) = \sum _{0 \leq j \leq \tilde n} c_{j}(n)q^{j}$$

We will call  {\em Gelfand-Graev interpolating family}, any family  $\{{\mathcal N}_{i}\}_{0 \leq j \leq \tilde n}$ of sets of subgroups of the standard upper unipotent subgroup  $N$ of  $G$ such that, for all $i$:
 
\begin{enumerate}
\item[a.]   $|{\mathcal N}_{i}| = c_{i}(n) $
\item  [b.] each subgroup $N' \in {\mathcal N}_{i}$ is defined by the vanishing of  $i$ upper unipotent entries, so that  $ [N:N'] = q^{i}.  $  
 
\end{enumerate}

  Notice that  ${\mathcal N}_{0}$ consists only of  $ N $   and   $ {\mathcal N}_{\tilde n} $  contains only one  subgroup, of order $q^{(n-1)}$. 
  
\begin{con}
Fix a non trivial character  $\varphi $ of  $ \mathbb F_{q}^{+} $. There exists a Gelfand-Graev interpolating family  $\{{\mathcal N}_{i}\}_{0 \leq j \leq \tilde n}$  such that

$$ \chi _{\psi }^{(q-1)(q^{2}-1) \cdots (q^{n-1}-1)}\otimes \chi _{\beta
_{1},...,\beta_{n}}^{\left(q+1\right)\left(q^{2}+q+1\right)..(q^{n-1}+q^{n-2}+...+q+1)}= $$ 
  $$ = \bigoplus_{0 \leq j \leq \tilde n} [\bigoplus_{N' \in {{\mathcal N}_{i}} }   \Gamma_{N'}((\psi \beta _{1}...\beta _{n})\varphi ) ]$$

\end{con}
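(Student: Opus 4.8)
The plan is to bypass the character table entirely: combine the projection (tensor) formula with the Bernstein--Zelevinsky description of the restriction of a cuspidal representation to the mirabolic subgroup, so that the statement collapses to a Mackey computation governed by the Bruhat decomposition of $GL(n-1,{\mathbb{F}}_{q})$. Write $Z$ for the center, $B=TN$ for the standard Borel ($T$ the diagonal torus, $N$ the standard unipotent subgroup), $P=P_{n}$ for the mirabolic subgroup (last row $(0,\dots ,0,1)$) and $V=V_{n}\cong {\mathbb{F}}_{q}^{n-1}$ for its unipotent radical, so that $N=V\rtimes U_{n-1}$ and $B\cap P=V\rtimes B_{n-1}$ with $U_{n-1}\subset B_{n-1}\subset GL(n-1,{\mathbb{F}}_{q})$ the usual subgroups of the Levi factor of $P$. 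The principal series factor is $Ind_{B}^{G}(\beta )$ with $\beta =\beta _{1}\otimes \cdots \otimes \beta _{n}$, and since $B\subseteq PZ$ we induce in stages; the projection formula then gives
$$\chi _{\psi }^{(q-1)\cdots (q^{n-1}-1)}\otimes Ind_{B}^{G}(\beta )\;\cong \;Ind_{PZ}^{G}\Big(\big(\chi _{\psi }|_{PZ}\big)\otimes Ind_{B}^{PZ}(\beta )\Big).$$
The decisive input is the classical fact (finite Gelfand--Graev / Bernstein--Zelevinsky theory: all proper derivatives of a cuspidal vanish) that for an irreducible cuspidal $\pi $ of $GL(n,{\mathbb{F}}_{q})$ one has $\pi |_{P_{n}}\cong Ind_{N}^{P_{n}}(\varphi )$, $\varphi $ the standard nondegenerate character $u\mapsto \varphi (u_{12}+\cdots +u_{n-1,n})$ of $N$ --- and this is short: $Ind_{N}^{P_{n}}(\varphi )$ is irreducible of dimension $[P_{n}:N]=\prod _{i=1}^{n-1}(q^{i}-1)=\dim \pi $, and genericity of $\pi $ plus Frobenius reciprocity gives a nonzero, hence bijective, $P_{n}$--map $\pi |_{P_{n}}\to Ind_{N}^{P_{n}}(\varphi )$. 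As $P_{n}\cap Z=1$ and $Z$ is central, $\chi _{\psi }|_{PZ}\cong Ind_{NZ}^{PZ}(\varphi \cdot \chi _{Z})$ with $\chi _{Z}=\psi |_{{\mathbb{F}}_{q}^{\times }}$ the central character.

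Substituting, the tensor product becomes $Ind_{PZ}^{G}\big(Ind_{NZ}^{PZ}(\varphi \chi _{Z})\otimes Ind_{B}^{PZ}(\beta )\big)$, a product of two monomial $PZ$--representations, which I would expand by Mackey. The double cosets $B\backslash PZ/NZ$ biject, after cancelling the common normal subgroups $V$ and $Z$, with $B_{n-1}\backslash GL(n-1,{\mathbb{F}}_{q})/U_{n-1}$, hence by Bruhat with $S_{n-1}$; take permutation matrices $\dot{w}\in GL(n-1,{\mathbb{F}}_{q})\subset P$ as representatives. Using $B\cap {}^{\dot{w}}(NZ)=Z\cdot ({}^{\dot{w}}N\cap N)$, the triviality of $\beta $ on unipotents, and $Ind_{PZ}^{G}\circ Ind_{B\cap {}^{\dot{w}}NZ}^{PZ}=Ind_{B\cap {}^{\dot{w}}NZ}^{G}$, one obtains
$$\chi _{\psi }\otimes Ind_{B}^{G}(\beta )\;\cong \;\bigoplus _{w\in S_{n-1}}Ind_{Z\cdot ({}^{\dot{w}}N\cap N)}^{G}\Big((\beta _{1}\cdots \beta _{n})\chi _{Z}\cdot \big({}^{\dot{w}}\varphi \,|_{{}^{\dot{w}}N\cap N}\big)\Big).$$
Conjugating the $w$--summand by $\dot{w}^{-1}$ makes it $Ind_{ZN_{w}}^{G}\big((\psi \beta _{1}\cdots \beta _{n})\varphi \big)$, where $N_{w}={}^{\dot{w}^{-1}}N\cap N$ is the subgroup of $N$ obtained by setting to zero the $\ell (w)$ coordinates $u_{\gamma }$ with $\gamma >0$, $w\gamma <0$; all such $\gamma $ lie in the Levi, so $V_{n}\subseteq N_{w}$ and $[N:N_{w}]=q^{\ell (w)}$, and after the conjugation the character on $N_{w}$ is literally the restriction of the standard $\varphi $, while on $Z$ it is the correct central character $\psi \beta _{1}\cdots \beta _{n}$ of the tensor product. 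Thus each summand is a generalized Gelfand--Graev representation $\Gamma _{N_{w}}\big((\psi \beta _{1}\cdots \beta _{n})\varphi \big)$ in the sense of the paper.

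It remains to organise the $(n-1)!$ summands. Put $\mathcal{N}_{i}=\{\,N_{w}:w\in S_{n-1},\ \ell (w)=i\,\}$: distinct $w$ have distinct inversion sets, hence distinct $N_{w}$; each $N_{w}$ is defined by the vanishing of $i=\ell (w)$ unipotent entries with $[N:N_{w}]=q^{i}$; and
$$\sum _{w\in S_{n-1}}q^{\ell (w)}=\prod _{k=1}^{n-1}(1+q+\cdots +q^{k-1})=(q+1)(q^{2}+q+1)\cdots (1+q+\cdots +q^{n-2})=\sum _{0\le i\le \tilde{n}}c_{i}(n)q^{i},$$
so $|\mathcal{N}_{i}|=c_{i}(n)$ and $\{\mathcal{N}_{i}\}_{0\le i\le \tilde{n}}$ is a Gelfand--Graev interpolating family: $\mathcal{N}_{0}=\{N\}$ gives the classical Gelfand--Graev representation and $\mathcal{N}_{\tilde{n}}=\{V_{n}\}$ (longest element of $S_{n-1}$), $|V_{n}|=q^{n-1}$, the last one. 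This is exactly the conjectured decomposition; for $n=3$, $S_{2}=\{e,s\}$ with $N_{e}=N$ and $N_{s}=N_{1}$, recovering Proposition 1.

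The main obstacle is to make the middle step rigorous and \emph{uniform in $n$}: one must nail down the precise finite-field form of the mirabolic restriction theorem for the cuspidal family $\chi _{\psi }^{(q-1)\cdots (q^{n-1}-1)}$ (standard, but a self-contained Clifford/Mackey induction on $n$ via the action of $V_{n}$ is probably cleanest), and one must follow the root-subgroup coordinates and the Chevalley commutator signs inside ${}^{\dot{w}}\varphi $ carefully enough to be certain that in the Mackey step no coordinate leaks out of $N$ and that conjugation by $\dot{w}^{-1}$ returns the standard character rather than a twist of it --- transparent for $w=e$ and $w=w_{0}$, and handled for intermediate $w$ by the torus-conjugacy of nondegenerate characters of $N$, but worth organising cleanly. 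By contrast the purely computational route (evaluating both sides on every conjugacy class, as in the proof of Proposition 1) looks much harder for general $n$: there is no uniformly convenient character table, and the twisting sets $\{x:xgx^{-1}\in ZN'\}$ --- the analogue of Lemma 1 --- proliferate. The strength of the argument above is precisely that it never uses the character table.
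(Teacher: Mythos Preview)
The statement you are addressing is a \emph{conjecture} in the paper; the paper proves only the case $n=3$ (Proposition~1), and does so by brute force: both sides are evaluated on each conjugacy class using the explicit character table of $GL(3,\mathbb F_q)$ together with the conjugating sets listed in Lemma~1. No argument for general $n$ is offered.

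Your approach is genuinely different and strictly stronger: it is a complete proof of the conjecture for all $n$. The ingredients check out. The inclusion $B\subseteq PZ$ and the projection formula reduce everything to $PZ$; the mirabolic restriction $\pi|_{P_n}\cong Ind_{N}^{P_n}(\varphi)$ for cuspidal $\pi$ is standard over finite fields (irreducibility of $Ind_{N}^{P_n}(\varphi)$, genericity of cuspidals, and your dimension count $[P_n:N]=\prod_{i=1}^{n-1}(q^i-1)$ suffice); the double-coset identification $B\backslash PZ/NZ\cong B_{n-1}\backslash GL_{n-1}/U_{n-1}\cong S_{n-1}$ is correct because $B_{n-1}wU_{n-1}=B_{n-1}wT_{n-1}U_{n-1}=B_{n-1}wB_{n-1}$; the intersection $B\cap{}^{\dot w}(NZ)=Z\cdot({}^{\dot w}N\cap N)$ and the conjugation by $\dot w^{-1}$ land exactly on $ZN_w$ with the \emph{untwisted} character $\varphi|_{N_w}$ (this is tautological: ${}^{\dot w^{-1}}({}^{\dot w}\varphi)=\varphi$, so your worry about commutator signs and torus-conjugacy of nondegenerate characters is unnecessary here); inversion sets determine $w$, so the $N_w$ are pairwise distinct; the vanishing roots all lie in the $GL_{n-1}$ Levi, so each $N_w$ contains $V_n$ and is a genuine subgroup defined by the vanishing of $\ell(w)$ entries; and the length generating function $\sum_{w\in S_{n-1}}q^{\ell(w)}=[n-1]_q!$ matches the paper's $\sum_j c_j(n)q^j$ exactly.

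What your route buys over the paper's is uniformity in $n$ and a conceptual reason why the packet sizes are governed by the $q$-factorial of $S_{n-1}$; what the paper's $n=3$ computation buys is that it is self-contained from the character table, without invoking the mirabolic restriction theorem. Your closing caveats are over-cautious: the only external input you actually need is the finite-field Gelfand--Kazhdan statement $\pi|_{P_n}\cong Ind_{N}^{P_n}(\varphi)$ for cuspidal $\pi$, and the sketch you give for it is already adequate.
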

 
\section{Application: Clebsch-Gordan coefficients for the tensor product of
two cuspidal representations of $G$.}

Finally, we notice
that the decomposition of tensor products of irreducible representations in
irreducible constituents can be easily computed, once you have described
these tensor products in terms of induced representations.  

For example, consider the case of \ the tensor product \ $\chi _{\varphi
}^{(q-1)(q^{2}-1)}\otimes \chi _{\psi }^{(q-1)(q^{2}-1)}$ of \ two cuspidal
(discrete series) representations of  $G.$

If \ $\left \langle \Omega ,\Theta \right \rangle _{G}$ stands for the usual
inner product of two complex valued functions on $G,$ i.e.,

$$\left \langle \Omega ,\Theta \right \rangle _{G}=\dfrac{1}{\left \vert
G\right \vert }\sum _{t\in G} \Omega (t)\overline {\Theta (t)},$$ 
where $\ \overline {\Theta (t)}$ \ is the conjugate of the complex number $%
\Theta (t)$, \ then we have \ that the multiplicities of the irreducible
representations of \ $G$ \ in this tensor product are given as follows:

\begin{enumerate}
\item $\left \langle \chi _{\varphi }^{(q-1)(q^{2}-1)}\otimes \chi _{\psi
}^{(q-1)(q^{2}-1)},\chi _{\alpha }^{1}\right \rangle =1$ if and only if $\
\varphi \psi =\alpha \circ {\mathbf{N}}_{3}$

\item $\left \langle \chi _{\varphi }^{(q-1)(q^{2}-1)}\otimes \chi _{\psi
}^{(q-1)(q^{2}-1)},\chi _{\alpha ,\beta ,\gamma
}^{\left
(q+1\right
)\left
(q^{2}+q+1\right )}\right \rangle
_{G}=\left
\langle \varphi \psi ,\alpha \beta \gamma \right \rangle
_{k^{\times }}$

\item $\left \langle \chi _{\varphi }^{(q-1)(q^{2}-1)}\otimes \chi _{\psi
}^{(q-1)(q^{2}-1)},\chi _{\Lambda }^{(q-1)(q^{2}-1)}\right \rangle =$
\end{enumerate}

$\left \langle \varphi \psi ,\Lambda \right \rangle _{K^{\times
}}+\left
\langle \varphi \psi ,\Lambda ^{q}\right \rangle _{K^{\times
}}+\left
\langle \varphi \psi ,\Lambda ^{q^{2}}\right \rangle _{K^{\times
}}+(q-3)\left \langle \varphi \psi ,\Lambda \right \rangle _{k^{\times
}}+ $

$ \left \langle \chi _{\varphi \psi ^{q}}^{(q-1)(q^{2}-1)},\chi _{\Lambda
}^{(q-1)(q^{2}-1)}\right \rangle +\left \langle \chi _{\varphi \psi
^{q^{2}}}^{(q-1)(q^{2}-1)},\chi _{\Lambda }^{(q-1)(q^{2}-1)}\right \rangle 
= $

$\left \langle \varphi \psi ,\Lambda \right \rangle _{K^{\times }}+\langle
\varphi \psi ,\Lambda ^{q}\rangle _{K^{\times }}+\left \langle \varphi \psi
,\Lambda ^{q^{2}}\right \rangle _{K^{\times }}+\left \langle \varphi \psi
^{q},\Lambda \right \rangle _{K^{\times }} + 
\left \langle \varphi \psi
^{q},\Lambda ^{q}\right \rangle _{K^{\times }}  
+\left \langle \varphi \psi
^{q},\Lambda ^{q^{2}}\right \rangle _{K^{\times }}$   

$ + \left \langle \varphi \psi ^{q^{2}},\Lambda \right \rangle _{K^{\times
}}+\left \langle \varphi \psi ^{q^{2}},\Lambda ^{q}\right \rangle
_{K^{\times }}+\left \langle \varphi \psi ^{q^{2}},\Lambda
^{q^{2}}\right
\rangle _{K^{\times }}+(q-3)\left \langle \varphi \psi
,\Lambda \right
\rangle _{k^{\times }}$ \bigskip


\begin{thebibliography}{99}

\bibitem{AHP}
Aburto, L., Pantoja,  J.: Tensor
Products of Irreducible Representations of the Groups GL(2,k) and SL(2,k), k 
a Finite Field, Comm. in Algebra,  {\bf 28}, 2507-2514  (2000)

\bibitem{AR1} 
Asmuth, C., Repka, J.:  Tensor
Products for Sl(2,k) II, Supercuspidal representations, Pacific J. Math.,  {\bf  97}, 1-18  (1981)

\bibitem{AR2} 
Asmuth, C., Repka, J.: Tensor
Products for Sl(2,k) I, Complementary series and the special representation, Pacific J. Math.  {\bf 97},
271-282 (1981)

\bibitem{DL} 
Deligne, P., Lusztig, G.:
Representations of reductive groups over finite fields,  Ann. of Math.  {\bf 103},
103-161 (1976)

\bibitem {K} 
Kable, A.: Legendre sums, Soto-Andrade sums and
Kloosterman sums, Pacific J. Math, {\bf 206}, 129-157 (2002)

\bibitem{R}
Rideau, G.: Sur la r\'{e}duction du produit tensoriel des repr\'{e}sentations de la s\'{e}rie discrete de 
 $SL(2, {\mathbb R } $ Ann. Inst. H. Poincar\'{e}, Section  A (N.S.) {\bf 4}, 67-76 (1966)
 
\bibitem{S} 
Steinberg, R.: The
representations of GL(3,q), GL(4,q), PGL(3,q) and PGL(4,q), Canadian J.
of Math., { \bf 3}, 225-235 (1951)
\bibitem {jaja} 
Soto-Andrade, J., Vargas, J.: The twisted
spherical functions on finite Poincare's upper half plane (any
characteristic), J. Algebra, {\bf  248}, 724-746 (2002) 

\bibitem {T} 
Tsuchikawa, M.:  The Plancherel transform on $SL2$  and
its applications to the decomposition of tensor products of irreducible
representations, J. Math  Kyoto Univ.,  {\bf 22}, 369-433 (1982/83) 

\end{thebibliography}
\end{document}